\title[Reliability of systems with dependent components]{Reliability of systems with dependent components based on lattice polynomial description}
\author{Alexander Dukhovny}
\address{Mathematics Department, San Francisco State University \\
San Francisco, CA 94132, USA} \email{dukhovny[at]math.sfsu.edu}
\author{Jean-Luc Marichal}
\address{Mathematics Research Unit, FSTC, University of Luxembourg \\
6, rue Coudenhove-Kalergi, L-1359 Luxembourg, Luxembourg} \email{jean-luc.marichal[at]uni.lu }
\date{March 25, 2011}
\begin{document}

\newenvironment{disarray}%
 {\everymath{\displaystyle\everymath{}}\array}%
 {\endarray}

\theoremstyle{plain}
\newtheorem{theorem}{Theorem}
\newtheorem{lemma}[theorem]{Lemma}
\newtheorem{proposition}[theorem]{Proposition}
\newtheorem{corollary}[theorem]{Corollary}
\newtheorem{fact}[theorem]{Fact}
\newtheorem*{main}{Main Theorem}

\theoremstyle{definition}
\newtheorem{definition}[theorem]{Definition}
\newtheorem{example}[theorem]{Example}

\theoremstyle{remark}
\newtheorem*{conjecture}{\indent Conjecture}
\newtheorem{remark}{Remark}
\newtheorem{claim}{Claim}

\newcommand{\N}{\mathbb{N}}                     
\newcommand{\Z}{\mathbb{Z}}                     
\newcommand{\R}{\mathbb{R}}                     
\newcommand{\Vspace}{\vspace{2ex}}                  
\newcommand{\Ind}{\mathrm{Ind}}

\begin{abstract}
Reliability of a system is considered where the components' random lifetimes may be dependent. The structure of the system is described by an
associated ``lattice polynomial'' function. Based on that descriptor, general framework formulas are developed and used to obtain direct results
for the cases where a) the lifetimes are ``Bayes-dependent'', that is, their interdependence is due to external factors (in particular, where
the factor is the ``preliminary phase'' duration) and b) where the lifetimes' dependence is implied by upper or lower bounds on lifetimes of
components in some subsets of the system. (The bounds may be imposed externally based, say, on the connections environment.) Several special
cases are investigated in detail.
\end{abstract}

\keywords{Reliability, semicoherent system, lattice polynomial function.}

\subjclass[2010]{62N05, 90B25 (Primary) 28B15, 94C10 (Secondary)}

\maketitle

\section{Introduction}

A semicoherent system consisting of nonrepairable components with random lifetimes can be associated with a Boolean function called the
\emph{structure function} of the system. It expresses the ``on'' indicator of the system through the ``on'' indicators of the components based
on the logic of connections. In turn, the structure function can be extended to a lattice polynomial function called the \emph{life function} of
the system, which expresses the system lifetime in terms of the component lifetimes using minimum and maximum in place of conjunction and
disjunction, respectively.

In this paper we use the results on lattice polynomial (l.p.) functions from a series of our recent articles to obtain formulas for system
reliability where the components' lifetimes are dependent random variables.

In the current literature, including such prominent sources as \cite{BarPro81,BarPro96,KauGroCru77,Ram90}, that case is explored only under the
additional assumption of exchangeable components (Barlow and Proschan \cite{BarPro81,BarPro96}) which presupposes that the marginal
distributions of components' lifetimes are identical. Meanwhile, in many realistic systems that assumption cannot be applied.

Using l.p.\ functions we derive a general formula for system reliability that yields specific results for some important special cases of
dependence among components.

One is the case where the dependence can be ascribed to the simultaneous influence on all components of some random factors (such as, say, the
system's environment physical parameters: pressure, temperature, moisture level, etc). For any given set of values of those factors components'
lifetimes are conditionally independent (but their probability distributions depend on the factors' values). We will refer to this case as
``Bayes-dependence''.

In particular, of special interest is a model where there is a preliminary period (``pre-phase'') of random duration and all components are
guaranteed to survive it. Once it is over, the components' individual ``after-phases'' are independent and their probability distributions
depend on the ``pre-phase'' duration.

Another pattern of dependence arises when there are collective upper or lower bounds on lifetimes of certain subsets of components imposed by
external conditions such as physical properties of the assembly or its environment. Say, when some components of the system are connected to the
same power source, the lifetime of the source (or the fuse) becomes a collective upper bound for those components' lifetimes. Or, when a part of
the system is backed up by a reserve standby device (too expensive, perhaps, for a regular duty) that instantly picks up duties of any failed
component in that subset, the device's lifetime becomes a lower bound for the components' lifetimes in that subset. A ``senior'' air traffic
controller could serve as an example here.

To our knowledge, models with such patterns of components' interdependence have not been analyzed in the literature. Using l.p.\ description of
a system in this paper presents a natural framework for such analysis because each component's service duration can be easily and naturally
represented using an l.p.\ function involving the component's own lifetime and bounding random variables.

Moreover, the case where the collective bounds are constant can be analyzed through so-called weighted lattice polynomial (w.l.p.) functions.

This paper is organized as follows. In Section 2 we introduce the l.p.\ function of a semicoherent system and use it to present a number of
reliability formulas for a system with generally dependent components' lifetimes.

In Section 3 we describe the case of ``Bayes-dependence'' of components' lifetimes and present exact formulas for the system reliability. In
turn, those formulas make it possible to provide exact formulation of reliability parameters such as the mean time-to-failure of the system.

In Section 4 we analyze systems with ``pre-phase'' dependent components. In particular, we present closed-form results for the case where the
``after-phase'' durations of components are assumed to have exponential distributions with failure rates depending on the ``pre-phase''
duration. We consider this model as giving a system analyst an alternative to using Weibull distributions, more complex both analytically and
statistically.

In Section 5 we consider systems with lower and/or upper bounds on service duration of (subsets) of components. There, we show how the system's
reliability can be computed as reliability of another, ``augmented'' system with added components and connections but without subset bounds.
Based on that representation, when the original components intrinsic lifetimes are independent, the augmented system's reliability is given by
formulas of Sections 2 and 3.

For any numbers $\alpha,\beta\in \overline{\R}=[-\infty,\infty]$ and any subset $A\subseteq [n]=\{1,\ldots,n\}$, let
$\mathbf{e}^{\alpha,\beta}_A$ denote the characteristic vector of $A$ in $\{\alpha,\beta\}^n$, that is, the $n$-tuple whose $i$th coordinate is
$\beta$, if $i\in A$, and $\alpha$, otherwise.

\section{Lattice polynomial function and reliability of a system}
\label{sec:LatPolF-SysRel}

Consider a system consisting of $n$ components that are interconnected. The \emph{state} of a component $i\in [n]$ can be represented by a
Boolean variable $x_i$ defined as
$$
x_i =
\begin{cases}
1, & \mbox{if component $i$ is functioning,}\\
0, & \mbox{if component $i$ is in a failed state.}
\end{cases}
$$
For convenience, we also introduce the state vector $\mathbf{x}=(x_1,\ldots,x_n)$.

As is common in the literature, the \emph{state of the system} is described from the component states through a Boolean function
$\phi\colon\{0,1\}^n\to\{0,1\}$, called the \emph{structure function} of the system and defined as
$$
\phi(\mathbf{x}) =
\begin{cases}
1, & \mbox{if the system is functioning,}\\
0, & \mbox{if the system is in a failed state.}
\end{cases}
$$

We shall assume throughout that the structure function $\phi$ is nondecreasing (the system is then said to be \emph{semicoherent}) and
nonconstant, this latter condition ensuring that $\phi(\mathbf{0})=0$ and $\phi(\mathbf{1})=1$, where $\mathbf{0}=(0,\ldots,0)$ and
$\mathbf{1}=(1,\ldots,1)$.

As a Boolean function, the structure function $\phi$ can also be regarded as a set function $v\colon 2^{[n]}\to\{0,1\}$. The correspondence is
straightforward: We have $v(A)=\phi(\mathbf{e}^{0,1}_A)$ for all $A\subseteq [n]$ and
\begin{equation}\label{eq:PhiForm1}
\phi(\mathbf{x}) = \sum_{A\subseteq [n]}v(A)\prod_{i\in A}x_i\prod_{i\in [n]\setminus A}(1-x_i).
\end{equation}
We shall henceforth make this identification and often write $\phi_v(\mathbf{x})$ instead of $\phi(\mathbf{x})$. Clearly, the structure function
$\phi_v$ is nondecreasing and nonconstant if and only if its underlying set function $v$ is nondecreasing and nonconstant.

Another concept that we shall often use in this paper is the \emph{dual} of the set function $v$, that is, the set function $v^*\colon
2^{[n]}\to\{0,1\}$ defined by
$$
v^*(A)=1-v([n]\setminus A).
$$

For any event $E$, let $\mathrm{Ind}(E)$ represent the \emph{indicator random variable} that gives $1$ if $E$ occurs and $0$ otherwise. For any
$i\in [n]$, we denote by $T_i$ the random \emph{time-to-failure} of component $i$ and we denote by $X_i(t)=\Ind(T_i>t)$ the random \emph{state
at time $t\geqslant 0$} of component $i$. For simplicity, we introduce the random time-to-failure vector $\mathbf{T}=(T_1,\ldots,T_n)$ and the
random state vector $\mathbf{X}(t)=(X_1(t),\ldots,X_n(t))$ at time $t\geqslant 0$. We also denote by $T_S$ the random time-to-failure of the
system and by $X_S(t)=\Ind(T_S>t)$ the random state at time $t\geqslant 0$ of the system.

The structure function $\phi$ clearly induces a functional relationship between the variables $T_1,\ldots,T_n$ and the variable $T_S$. As we
will see in Theorem~\ref{thm:SFvsLP}, $T_S$ is always an l.p.\ function of the variables $T_1,\ldots,T_n$. Just as for the structure function,
this l.p.\ function provides a complete description of the structure of the system.

Let us first recall the concept of l.p.\ function of real variables; see for instance Birkhoff~\cite{Bir67} and Gr\"atzer~\cite{Grae03}. Let
$L\subseteq\overline{\R}$ denote a totally ordered bounded lattice whose lattice operations $\wedge$ and $\vee$ are respectively the minimum and
maximum operations. Denote also by $a$ and $b$ the bottom and top elements of $L$. We assume that $a\neq b$.

\begin{definition}
The class of \emph{lattice polynomial} (l.p.) functions from $L^n$ to $L$ is defined as follows:
\begin{enumerate}
\item[(i)] For any $k\in [n]$, the projection $\mathbf{t}=(t_1,\ldots,t_n)\mapsto t_k$ is an l.p.\ function from $L^n$ to $L$.

\item[(ii)] If $p$ and $q$ are l.p.\ functions from $L^n$ to $L$, then $p\wedge q$ and $p\vee q$ are l.p.\ functions from $L^n$ to $L$.

\item[(iii)] Every l.p.\ function from $L^n$ to $L$ is constructed by finitely many applications of the rules (i) and (ii).
\end{enumerate}
\end{definition}

Clearly, any l.p.\ function $p\colon L^n\to L$ is nondecreasing and nonconstant. Furthermore, it was proved (see for instance \cite{Bir67}) that such a function can be expressed in \emph{disjunctive} and \emph{conjunctive} normal forms, that is, there always
exist nonconstant set functions $w^d\colon 2^{[n]}\to\{a,b\}$ and $w^c\colon 2^{[n]}\to\{a,b\}$, with $w^d(\varnothing)=a$ and
$w^c(\varnothing)=b$, such that
\begin{equation}\label{eq:LPDisjConj}
p(\mathbf{t})=\bigvee_{\textstyle{A\subseteq [n]\atop w^d(A)=b}}\bigwedge_{i\in A} t_i = \bigwedge_{\textstyle{A\subseteq [n]\atop
w^c(A)=a}}\bigvee_{i\in A} t_i.
\end{equation}

Clearly, the set functions $w^d$ and $w^c$ that disjunctively and conjunctively define the l.p.\ function $p$ in (\ref{eq:LPDisjConj}) are not
unique. However, it can be shown \cite{Marc} that, from among all the possible set functions that disjunctively define $p$, only one is
nondecreasing. Similarly, from among all the possible set functions that conjunctively define $p$, only one is nonincreasing. These special set
functions are given by
$$
w^d(A)=p(\mathbf{e}^{a,b}_A)\qquad\mbox{and}\qquad w^c(A)=p(\mathbf{e}^{a,b}_{[n]\setminus A}).
$$
The l.p.\ function disjunctively defined by a given nondecreasing set function $w\colon 2^{[n]}\to\{a,b\}$ will henceforth be denoted $p_w$. We
then have
$$
p_w(\mathbf{t})=\bigvee_{\textstyle{A\subseteq [n]\atop w(A)=b}}\bigwedge_{i\in A} t_i = \bigwedge_{\textstyle{A\subseteq [n]\atop
w^*(A)=b}}\bigvee_{i\in A} t_i,
$$
where $w^*$ is the \emph{dual} of $w$, defined as $w^*=\gamma\circ (\gamma^{-1}\circ w)^*$, the function $\gamma\colon\{0,1\}\to\{a,b\}$ being a simple
transformation defined by $\gamma(0)=a$ and $\gamma(1)=b$.

\begin{remark}\label{rem:7s6ad}
From any nonconstant and nondecreasing set function $w\colon 2^{[n]}\to\{a,b\}$, define the set function $u_w\colon 2^{[n]}\to\{a,b\}$ as
$$
u_w(A)=
\begin{cases}
b, & \mbox{if $w(A)=b$ and $w(B)=a$ for all $B\varsubsetneq A$,}\\
a, & \mbox{otherwise}.
\end{cases}
$$
The disjunctive and conjunctive representations of the l.p.\ function $p_w$ having a minimal number of terms are given by
$$
p_w(\mathbf{t})=\bigvee_{\textstyle{A\subseteq [n]\atop u_w(A)=b}}\bigwedge_{i\in A} t_i = \bigwedge_{\textstyle{A\subseteq [n]\atop
u_{w^*}(A)=b}}\bigvee_{i\in A} t_i
$$
(see Proposition~8 in \cite{Marc}) and are exactly those  minimal paths and cuts representations of $p_w$ (as given, say in Theorem 3.5 of
\cite{BarPro81}), namely
$$
p_w(\mathbf{t})=\bigvee_{j=1}^r\bigwedge_{i\in P_j}t_i = \bigwedge_{j=1}^s\bigvee_{i\in K_j}t_i\, ,
$$
where $P_1,\ldots,P_r$ are the minimal path sets and $K_1,\ldots,K_s$ are the minimal cut sets.
\end{remark}

The following theorem points out the one-to-one correspondence between the structure function and the l.p.\ function that expresses $T_S$
directly in terms of the variable $T_1,\ldots,T_n$. It is the latter fact that makes, in our opinion, the l.p.\ function a preferable system
descriptor.

As lifetimes are $[0,\infty]$-valued, we shall henceforth assume without loss of generality that $L=[0,\infty]$, that is, $a=0$ and $b=\infty$.
Recall that the coproduct of $n$ Boolean variables $x_1,\ldots,x_n$ is defined by $\amalg_i x_i=1-\Pi_i(1-x_i)$.

\begin{theorem}\label{thm:SFvsLP}
Consider an $n$-component system whose structure function $\phi$ is nondecreasing and nonconstant. Then we have
\begin{equation}\label{eq:TpT}
T_S=p_w(T_1,\ldots,T_n),
\end{equation}
where $w=\gamma\circ v$. Conversely, any system fulfilling (\ref{eq:TpT}) for some l.p.\ function $p_w\colon L^n\to L$ has the nondecreasing and
nonconstant structure function $\phi_v$, where $v=\gamma^{-1}\circ w$.
\end{theorem}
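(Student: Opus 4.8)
The plan is to establish a single algebraic identity linking the l.p.\ function on $L=[0,\infty]$ with the Boolean structure function on $\{0,1\}$ through thresholding, and then to read off both directions from it. The pivotal observation is that for each fixed $t\geqslant 0$ the map $\theta_t\colon [0,\infty]\to\{0,1\}$ defined by $\theta_t(s)=\Ind(s>t)$ is a lattice homomorphism: since $\min_i s_i>t$ exactly when every $s_i>t$ and $\max_i s_i>t$ exactly when some $s_i>t$, we have $\theta_t(s\wedge s')=\theta_t(s)\wedge\theta_t(s')$ and $\theta_t(s\vee s')=\theta_t(s)\vee\theta_t(s')$, while $\theta_t(0)=0$ and $\theta_t(\infty)=1$. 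First I would prove, by induction on the construction of $p_w$ from rules (i)--(ii), that $\theta_t$ commutes with every l.p.\ function, and then identify the resulting $\{0,1\}$-valued l.p.\ function with the structure function. Concretely, writing $v=\gamma^{-1}\circ w$ and using the disjunctive form \eqref{eq:LPDisjConj}, I would establish the central identity
\begin{equation}\label{eq:threshold}
\Ind\bigl(p_w(t_1,\ldots,t_n)>t\bigr)=\phi_v\bigl(\Ind(t_1>t),\ldots,\Ind(t_n>t)\bigr)\qquad(t\geqslant 0).
\end{equation}
Here one uses that, for nondecreasing $v$, the disjunctive normal form $\bigvee_{A\colon v(A)=1}\bigwedge_{i\in A}x_i$ equals $\phi_v(\mathbf{x})$: evaluated at $\mathbf{x}=\mathbf{e}^{0,1}_S$ it is $1$ iff some $A\subseteq S$ has $v(A)=1$, which by monotonicity holds iff $v(S)=\phi_v(\mathbf{e}^{0,1}_S)=1$.

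With \eqref{eq:threshold} in hand both assertions are immediate. For the forward direction, the structure function encodes the system state by $X_S(t)=\phi_v(\mathbf{X}(t))=\phi_v(\Ind(T_1>t),\ldots,\Ind(T_n>t))$ for every $t\geqslant 0$, where $v(A)=\phi(\mathbf{e}^{0,1}_A)$. Setting $w=\gamma\circ v$ and applying \eqref{eq:threshold} with $t_i=T_i$ gives $\Ind(T_S>t)=\phi_v(\mathbf{X}(t))=\Ind(p_w(\mathbf{T})>t)$ for all $t\geqslant 0$. Since two elements $s,s'\in[0,\infty]$ satisfy $\Ind(s>t)=\Ind(s'>t)$ for all $t\geqslant 0$ only when $s=s'$ (the sets $[0,s)$ and $[0,s')$ then coincide), this forces $T_S=p_w(\mathbf{T})$, which is \eqref{eq:TpT}. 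For the converse, assume $T_S=p_w(\mathbf{T})$ for some l.p.\ function $p_w$ and put $v=\gamma^{-1}\circ w$. Thresholding and invoking \eqref{eq:threshold} yield $X_S(t)=\Ind(p_w(\mathbf{T})>t)=\phi_v(\mathbf{X}(t))$ for every $t\geqslant 0$, so the system state is the Boolean function $\phi_v$ of the component states; thus $\phi_v$ is the structure function, and it is nondecreasing and nonconstant because $v=\gamma^{-1}\circ w$ is, $w$ being the nondecreasing nonconstant set function defining $p_w$.

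The routine parts are the homomorphism property of $\theta_t$ and the elementary ``equal tails force equality'' step. The main obstacle I expect is proving the central identity \eqref{eq:threshold} cleanly: making the inductive commutation argument for $\theta_t$ rigorous and correctly matching the disjunctive representation of $p_w$ (indexed by $w(A)=b$) with the set-function description of $\phi_v$ (indexed by $v(A)=1$) under the translation $\gamma$, so that the same family of subsets $A$ governs both sides. One should also check, in the converse, that ranging over all arguments in \eqref{eq:threshold} realizes $\phi_v$ on all of $\{0,1\}^n$, so that the structure function is genuinely pinned down. Once that bookkeeping is settled, no further estimates are needed.
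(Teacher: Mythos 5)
Your proposal is correct and takes essentially the same approach as the paper: your central identity $\Ind\bigl(p_w(\mathbf{T})>t\bigr)=\phi_v(\mathbf{X}(t))$ is exactly what the paper derives by distributing the indicator over $\vee$ and $\wedge$ in the disjunctive normal form, and both arguments then conclude from the equivalence ``$T_S=p_w(\mathbf{T})$ if and only if $X_S(t)=\phi_v(\mathbf{X}(t))$ for all $t\geqslant 0$''. If anything, you spell out the ``equal tails force equality'' step that the paper leaves implicit.
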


\begin{proof}
The proof mainly lies on the distributive property of the indicator function $\Ind(\cdot)$ with respect to disjunction and conjunction: for any
events $E$ and $E'$,
\begin{eqnarray*}
\Ind(E\vee E') &=& \Ind(E)\vee\Ind(E'),\\
\Ind(E\wedge E') &=& \Ind(E)\wedge\Ind(E').
\end{eqnarray*}
Thus, for any $t\geqslant 0$ we have
\begin{eqnarray*}
\Ind(p_w(\mathbf{T})>t) &=& \Ind\Big(\bigvee_{\textstyle{A\subseteq [n]\atop v(A)=1}}\bigwedge_{i\in A} T_i>t\Big)
~=~ \bigvee_{\textstyle{A\subseteq [n]\atop v(A)=1}}\bigwedge_{i\in A} \Ind(T_i>t)\\
&=& \coprod_{\textstyle{A\subseteq [n]\atop v(A)=1}}\prod_{i\in A} X_i(t)\\
&=& \phi_v(\mathbf{X}(t)).
\end{eqnarray*}
Hence, we have $T_S=p_w(\mathbf{T})$ if and only if $X_S(t)=\phi_v(\mathbf{X}(t))$ for all $t\geqslant 0$, which completes the proof.
\end{proof}


The \emph{reliability function} of component $i$ is defined, for any $t\geqslant 0$, by
$$
R_i(t)=\Pr(T_i>t)=\Pr(X_i(t)=1)=\mathrm{E}[X_i(t)],
$$
that is, the probability that component $i$ does not fail in the time interval $[0,t]$. Similarly, for any $t\geqslant 0$, the system
reliability function is
$$
R_S(t)=\Pr(T_S>t)=\Pr(X_S(t)=1)=\mathrm{E}[X_S(t)],
$$
that is, the probability that the system does not fail in the time interval $[0,t]$.

Based on representation (\ref{eq:PhiForm1}) and its dual form, we present general formulas for the system reliability function in case of
generally dependent variables $T_1,\ldots,T_n$ (first presented in \cite{Duk07}).

\begin{theorem}\label{thm:RSv}
We have
\begin{eqnarray}
R_S(t) &=& \sum_{A\subseteq [n]} v(A)\,\Pr(\mathbf{X}(t)=\mathbf{e}^{0,1}_A),\label{eq:RSv}\\
R_S(t) &=& 1-\sum_{A\subseteq [n]} v^*(A)\,\Pr(\mathbf{X}(t)=\mathbf{e}^{0,1}_{[n]\setminus A})\label{eq:RSvs}.
\end{eqnarray}
\end{theorem}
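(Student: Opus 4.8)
The plan is to start from the defining relation $R_S(t)=\Pr(X_S(t)=1)=\mathrm{E}[X_S(t)]$ and substitute the structure-function identity $X_S(t)=\phi_v(\mathbf{X}(t))$ established in Theorem~\ref{thm:SFvsLP}. Since $\phi_v=\phi$ is a Boolean function with the explicit multilinear representation \eqref{eq:PhiForm1}, I would write
\[
R_S(t)=\mathrm{E}\Big[\sum_{A\subseteq [n]}v(A)\prod_{i\in A}X_i(t)\prod_{i\in [n]\setminus A}(1-X_i(t))\Big].
\]
The key observation is that for each fixed $A$, the product $\prod_{i\in A}X_i(t)\prod_{i\in [n]\setminus A}(1-X_i(t))$ is exactly the indicator of the event $\{\mathbf{X}(t)=\mathbf{e}^{0,1}_A\}$: it equals $1$ precisely when $X_i(t)=1$ for $i\in A$ and $X_i(t)=0$ for $i\notin A$, and $0$ otherwise. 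Taking expectations of this indicator yields $\Pr(\mathbf{X}(t)=\mathbf{e}^{0,1}_A)$, and by linearity of expectation (pulling the finite sum and the constants $v(A)$ outside) I obtain \eqref{eq:RSv} directly.

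For \eqref{eq:RSvs}, the natural route is to exploit the duality between $v$ and $v^*$. I would apply the dual form of \eqref{eq:PhiForm1}, or equivalently use the identity $\phi_v(\mathbf{x})=1-\phi_{v^*}(\mathbf{1}-\mathbf{x})$ coming from the definition $v^*(A)=1-v([n]\setminus A)$. Writing $R_S(t)=1-\Pr(X_S(t)=0)=1-\mathrm{E}[1-\phi_v(\mathbf{X}(t))]$ and expanding $1-\phi_v$ via the dual set function gives a sum over $A$ of terms $v^*(A)$ times the indicator of a complementary event; the complementation of coordinates converts the pattern $\mathbf{e}^{0,1}_A$ into $\mathbf{e}^{0,1}_{[n]\setminus A}$, producing \eqref{eq:RSvs}.

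Since the events $\{\mathbf{X}(t)=\mathbf{e}^{0,1}_A\}$ for distinct $A$ are disjoint and exhaust the sample space, neither step requires any independence assumption on the $T_i$, which is exactly the point for the dependent-component setting. I do not anticipate a genuine obstacle here; the only point demanding care is bookkeeping the index-set complementation in the dual computation so that the argument of $\Pr$ comes out as $\mathbf{e}^{0,1}_{[n]\setminus A}$ rather than $\mathbf{e}^{0,1}_A$. Verifying the base case of nonconstancy ($v(\varnothing)=0$, $v([n])=1$) is automatic and need not be invoked.
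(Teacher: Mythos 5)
Your proposal is correct and follows essentially the same route as the paper: expand $\phi_v$ in its primal form \eqref{eq:PhiForm1}, use linearity of expectation and the fact that each product $\prod_{i\in A}X_i(t)\prod_{i\in [n]\setminus A}(1-X_i(t))$ is the indicator of $\{\mathbf{X}(t)=\mathbf{e}^{0,1}_A\}$, and then obtain \eqref{eq:RSvs} from the dual form of the structure function (your identity $\phi_v(\mathbf{x})=1-\phi_{v^*}(\mathbf{1}-\mathbf{x})$ is exactly the second expression in Table~\ref{tab:StrFun}, which is what the paper invokes). No gaps; the observation that no independence is needed is precisely the point of the theorem.
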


\begin{proof}
By (\ref{eq:PhiForm1}), we have
\begin{eqnarray}
R_S(t) ~=~ \mathrm{E}[\phi_v(\mathbf{X}(t))] &=& \sum_{A\subseteq [n]}v(A)\,\mathrm{E}\Big[\prod_{i\in A}X_i(t)\prod_{i\in
[n]\setminus A}(1-X_i(t))\Big]\label{eq:RSv1}\\
&=& \sum_{A\subseteq [n]} v(A)\,\Pr(\mathbf{X}(t)=\mathbf{e}^{0,1}_A),\nonumber
\end{eqnarray}
which proves (\ref{eq:RSv}). Formula (\ref{eq:RSvs}) can be proved similarly by using the dual form of $\phi_v$ (i.e., the second expression in
Table~\ref{tab:StrFun}).
\end{proof}

More reliability function formulas can be obtained based on other structure function representations. Any Boolean function has a unique
expression as a multilinear function in $n$ variables,
\begin{equation}\label{eq:PhiForm2}
\phi_v(\mathbf{x}) = \sum_{A\subseteq [n]}m_v(A)\prod_{i\in A}x_i
\end{equation}
(see for instance \cite{HamRud68}), where the set function $m_v\colon 2^{[n]}\to\Z$ is the \emph{M\"obius transform} of $v$,
defined by
$$
m_v(A)=\sum_{B\subseteq A}(-1)^{|A|-|B|}\, v(B).
$$

By using the dual set function $v^*$ we can easily derive further useful forms of the structure function. Table~\ref{tab:StrFun} summarizes the
best known forms of the structure function.

\begin{table}[tbp]
$$
\begin{array}{|c|c|}
\hline \mbox{Name} & \phi_v(\mathbf{x}) \\
\hline & \\
\mbox{Primal form} & \sum\limits_{A\subseteq [n]}v(A)\prod\limits_{i\in A}x_i\prod\limits_{i\in [n]\setminus A}(1-x_i)\\ &\\
\mbox{Dual form} & 1-\sum\limits_{A\subseteq [n]}v^*(A)\prod\limits_{i\in [n]\setminus A}x_i\prod\limits_{i\in A}(1-x_i)\\ &\\
\mbox{Primal M\"obius form} & \sum\limits_{A\subseteq [n]}m_v(A)\prod\limits_{i\in A}x_i\\ &\\
\mbox{Dual M\"obius form} & \sum\limits_{A\subseteq [n]}m_{v^*}(A)\coprod\limits_{i\in A}x_i\\ &\\
\mbox{Disjunctive normal form} & \coprod\limits_{A\subseteq [n]} v(A)\prod\limits_{i\in A} x_i\\ &\\
\mbox{Conjunctive normal form} & \prod\limits_{A\subseteq [n]_{\mathstrut}} v^*(A)\coprod\limits_{i\in A} x_i\\
\hline
\end{array}
$$
\caption{Various forms of the structure function} \label{tab:StrFun}
\end{table}

\begin{remark}
Since $\phi_v$ is a Boolean function, we can always replace in its expression each product $\Pi$ and coproduct $\amalg$ with the minimum
$\wedge$ and the maximum $\vee$, respectively. Thus, Theorem~\ref{thm:SFvsLP} essentially states that $\phi_v$ is also an l.p.\ function that
has just the same max-min form as $p_w$ but applied to binary arguments. More precisely, $\phi_v$ is \emph{similar} to $p_w$ in the sense that
$\gamma\circ\phi_v=p_w\circ(\gamma,\ldots,\gamma)$.
\end{remark}

Consider the \emph{joint distribution function} and the \emph{joint survival function}, defined respectively as
$$
F(\mathbf{t}) = \Pr(T_i\leqslant t_i\;\forall i\in [n])\qquad\mbox{and}\qquad R(\mathbf{t}) = \Pr(T_i>t_i\;\forall i\in [n]).
$$
By using the same argument as in the proof of Theorem~\ref{thm:RSv}, we obtain two further equivalent expressions of $R_S(t)$.

\begin{theorem}\label{thm:RSmvs}
We have
\begin{eqnarray*}
R_S(t) &=& \sum_{A\subseteq [n]} m_v(A) \, R(\mathbf{e}^{0,t}_A)\label{eq:RSmv}\\
R_S(t) &=& 1-\sum_{A\subseteq [n]} m_{v^*}(A) \, F(\mathbf{e}^{t,\infty}_{[n]\setminus A}).\label{eq:RSmvs}
\end{eqnarray*}
\end{theorem}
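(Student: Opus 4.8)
The plan is to mirror the proof of Theorem~\ref{thm:RSv}, but to start from the two Möbius forms of the structure function in Table~\ref{tab:StrFun} instead of from its primal and dual forms. In both cases the pivot is the identity $R_S(t)=\mathrm{E}[X_S(t)]=\mathrm{E}[\phi_v(\mathbf{X}(t))]$, so it suffices to take the expectation of the chosen representation of $\phi_v(\mathbf{X}(t))$ term by term and to recognize each resulting probability as a value of the joint survival function $R$ or the joint distribution function $F$.

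For the first formula I would substitute the primal Möbius form $\phi_v(\mathbf{x})=\sum_{A\subseteq[n]}m_v(A)\prod_{i\in A}x_i$ with $\mathbf{x}=\mathbf{X}(t)$ and apply linearity of expectation. Since $\prod_{i\in A}X_i(t)=\prod_{i\in A}\Ind(T_i>t)=\Ind(T_i>t\;\forall i\in A)$, each term contributes $\mathrm{E}[\prod_{i\in A}X_i(t)]=\Pr(T_i>t\;\forall i\in A)$. Padding the coordinates outside $A$ with the value $0$ then lets me rewrite this probability as $R(\mathbf{e}^{0,t}_A)$, which yields $R_S(t)=\sum_A m_v(A)\,R(\mathbf{e}^{0,t}_A)$. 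Here I would note the one hypothesis that makes the padding harmless, namely that the lifetimes are positive almost surely ($R_i(0)=1$), so that the extra constraints $T_i>0$ for $i\notin A$ are redundant.

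For the second formula I would instead take the dual Möbius form $\phi_v(\mathbf{x})=\sum_{A\subseteq[n]}m_{v^*}(A)\coprod_{i\in A}x_i$ and rewrite each coproduct as $\coprod_{i\in A}x_i=1-\prod_{i\in A}(1-x_i)$. The step I expect to need the most care is extracting the leading constant: this works precisely because $\sum_{A\subseteq[n]}m_{v^*}(A)=v^*([n])=1-v(\varnothing)=1$, where I use Möbius inversion in the form $v^*([n])=\sum_{A\subseteq[n]}m_{v^*}(A)$ together with the nonconstancy hypothesis $v(\varnothing)=0$. After this reduction $\phi_v(\mathbf{x})=1-\sum_A m_{v^*}(A)\prod_{i\in A}(1-x_i)$, and with $\mathbf{x}=\mathbf{X}(t)$ we have $\prod_{i\in A}(1-X_i(t))=\prod_{i\in A}\Ind(T_i\leqslant t)=\Ind(T_i\leqslant t\;\forall i\in A)$. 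Taking expectations and padding the coordinates outside $A$ with $\infty$ (which is automatically harmless, since $T_i\leqslant\infty$ always) identifies each term with $F(\mathbf{e}^{t,\infty}_{[n]\setminus A})$, delivering the claimed expression.

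Beyond this bookkeeping there is no genuine difficulty: the computation is a direct transcription of the $\mathrm{E}[\phi_v(\mathbf{X}(t))]$ argument already used in Theorem~\ref{thm:RSv}. The only points deserving explicit mention are the almost-sure positivity of the lifetimes (for the survival-function formula) and the constant-extraction identity $\sum_A m_{v^*}(A)=1$ (for the distribution-function formula).
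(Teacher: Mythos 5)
Your proposal is correct and follows essentially the same route as the paper: take expectations of the primal M\"obius form for the first identity and of the dual M\"obius form (rewriting each coproduct as $1-\prod_{i\in A}(1-x_i)$ and extracting the constant via $\sum_{A\subseteq[n]}m_{v^*}(A)=\phi_{v^*}(\mathbf{1})=1$) for the second. Your two explicit side remarks --- the almost-sure positivity of lifetimes needed to identify $\Pr(T_i>t\;\forall i\in A)$ with $R(\mathbf{e}^{0,t}_A)$, and the M\"obius-inversion justification of the constant-extraction identity --- are sound refinements of steps the paper performs implicitly.
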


\begin{proof}
By (\ref{eq:PhiForm2}), we have
\begin{eqnarray*}
R_S(t) = \mathrm{E}[\phi_v(\mathbf{X}(t))] &=& \sum_{A\subseteq [n]} m_v(A) \,\mathrm{E}\Big[\prod_{i\in A}X_i(t)\Big]\\
&=& \sum_{A\subseteq [n]} m_v(A) \,\Pr(T_i>t\;\forall i\in A)\\
&=& \sum_{A\subseteq [n]} m_v(A) \,R(\mathbf{e}^{0,t}_A).
\end{eqnarray*}
Similarly, using the dual M\"obius form of $\phi_v$ (i.e., the fourth expression in Table~\ref{tab:StrFun}), we have
\begin{eqnarray*}
R_S(t) = \mathrm{E}[\phi_v(\mathbf{X}(t))] &=& \sum_{A\subseteq [n]} m_{v^*}(A) \,\mathrm{E}\Big[\coprod_{i\in A}X_i(t)\Big]\\
&=& 1-\sum_{A\subseteq [n]} m_{v^*}(A) \,\mathrm{E}\Big[\prod_{i\in A}(1-X_i(t))\Big]\\
&=& 1-\sum_{A\subseteq [n]} m_{v^*}(A) \,\Pr(T_i\leqslant t\;\forall i\in A)\\
&=& 1-\sum_{A\subseteq [n]} m_{v^*}(A) \,F(\mathbf{e}^{t,\infty}_{[n]\setminus A}),
\end{eqnarray*}
where we have used the fact that $\sum_{A\subseteq [n]} m_{v^*}(A) = \phi_{v^*}(\mathbf{1})=1$.
\end{proof}

\begin{remark}
Based on the minimal path/cut sets representations (see Remark~\ref{rem:7s6ad}), one can find in \cite{AgrBar84} and further in, say, \cite{BloLiSav03} and \cite{NavRuiSan07} linear representations of $R_S(t)$ in terms of reliability functions ``series'' and/or ``parallel'' subsystems. In this paper we chose to use the representations of Theorem~\ref{thm:RSv}, among other reasons, in order to use the M\"obius transform and obtain two further equivalent expressions of $R_S(t)$.
\end{remark}

The \emph{mean time-to-failure of component $i$} is defined as $\mathrm{MTTF}_i=\mathrm{E}[T_i]$ and similarly the \emph{mean time-to-failure of
the system} is defined as $\mathrm{MTTF}_S=\mathrm{E}[T_S]$. These expected values can be calculated by the following formulas (see for instance \cite{RauHoy04})
$$
\mathrm{MTTF}_i=\int_0^{\infty} R_i(t)\, dt \qquad\mbox{and}\qquad \mathrm{MTTF}_S=\int_0^{\infty} R_S(t)\, dt.
$$

It is noteworthy that Theorem~\ref{thm:RSmvs} immediately provides concise expressions for the mean time-to-failure of the system, namely
\begin{eqnarray*}
\mathrm{MTTF}_S &=& \sum_{A\subseteq [n]} m_v(A) \, \int_0^{\infty} R(\mathbf{e}^{0,t}_A)\, dt,\\
\mathrm{MTTF}_S &=& \sum_{A\subseteq [n]} m_{v^*}(A) \, \int_0^{\infty} \big(1-F(\mathbf{e}^{t,\infty}_{[n]\setminus A})\big)\, dt.
\end{eqnarray*}

Theorem~\ref{thm:RSmvs} may suggest that the complete knowledge of the joint survival (or joint distribution) function is needed for the
calculation of the system reliability function.  Actually, as Theorem~\ref{thm:RSv} shows, all the needed information is encoded in the
distribution of the indicator vector $\mathbf{X}(t)$. In turn, the distribution of $\mathbf{X}(t)$ can be easily expressed (see
\cite{Duk07,DukMar08}) in terms of the joint distribution function of $\mathbf{X}(t)$, which is defined by
$$
\Pr(\mathbf{X}(t)=\mathbf{e}_A^{0,1}) = \sum_{B\subseteq A} (-1)^{|A|-|B|}\, F(\mathbf{e}_B^{t,\infty}).
$$
That is, the joint distribution function needs to be known only where its arguments are equal to $t$ or $\infty$.

In the case when $T_1,\ldots,T_n$ are independent, which implies that the indicator variables $X_1(t),\ldots,X_n(t)$ are independent for all
$t\geqslant 0$, from (\ref{eq:RSv1}) we immediately obtain (see for instance \cite{RauHoy04}):
\begin{equation}\label{eq:RsRiInd}
R_S(t) = \sum_{A\subseteq [n]} v(A) \prod_{i\in A}R_i(t)\prod_{i\in [n]\setminus A}(1-R_i(t)).
\end{equation}

By extending formally the structure function $\phi_v$ to $[0,1]^n$ by linear interpolation, we define the \emph{multilinear extension}\/ of
$\phi_v$ (a concept introduced in game theory by Owen~\cite{Owe72}), that is, the multilinear function $\overline{\phi}_v\colon [0,1]^n\to
[0,1]$ defined as
\begin{equation}\label{eq:MLE1}
\overline{\phi}_v(\mathbf{x}) = \sum_{A\subseteq [n]}v(A)\prod_{i\in A}x_i\prod_{i\in [n]\setminus A}(1-x_i).
\end{equation}

We then observe that each of the alternative expressions of $\phi_v$ introduced in Table~\ref{tab:StrFun} can be formally regarded as a function
from $[0,1]^n$ to $[0,1]$, which then identifies with the multilinear extension of $\phi_v$.

Combining (\ref{eq:RsRiInd}) and (\ref{eq:MLE1}), we retrieve the classical formula (see for instance \cite{ProSet76,RauHoy04})\footnote{A similar expression was obtained for the general case in Section 5 of \cite{NavSpi10}.}
$$
R_S(t) = \overline{\phi}_v(R_1(t),\ldots,R_n(t))
$$
where the function $\overline{\phi}_v$ is called the \emph{reliability polynomial}. Thus, both $R_S(t)$ and $\mathrm{MTTF}_S$ can be expressed
in different forms, according to the expressions of $\overline{\phi}_v$ corresponding to Table~\ref{tab:StrFun}. For instance, using the primal
M\"obius form of $\overline{\phi}_v$, we obtain
\begin{eqnarray*}
R_S(t) &=& \sum_{A\subseteq [n]} m_v(A) \, \prod_{i\in A} R_i(t),\\
\mathrm{MTTF}_S &=& \sum_{A\subseteq [n]} m_v(A) \, \int_0^{\infty} \prod_{i\in A} R_i(t)\, dt.
\end{eqnarray*}

Consider the special case when the reliability of every subset $A\subseteq [n]$ depends only on the number $|A|$ of components in $A$ (which
happens, for example, when the component lifetimes are exchangeable). That is,
$$
R(\mathbf{e}^{0,t}_A)=R(\mathbf{e}^{0,t}_{A'})\quad\mbox{whenever}\quad |A|=|A'|,
$$
and similarly for $F(\mathbf{e}^{t,\infty}_{[n]\setminus A})$.

Defining $R(k,t) = R(\mathbf{e}^{0,t}_A)$, $F(k,t) = F(\mathbf{e}^{t,\infty}_{[n]\setminus A})$, where $k=|A|$, and
$$
\overline{m}_v(k) = \sum_{A\subseteq [n]\, :\, |A|=k} m_v(A),
$$
from Theorem~\ref{thm:RSmvs} we derive the following corollary.\footnote{For the case of exchangeable component lifetimes, this corollary was
actually obtained in \cite{NavRuiSan07} by using the concept of signature.}

\begin{corollary}
If the reliability of every subset depends only on the number of components in the subset, then
\begin{eqnarray*}
R_S(t) &=& \sum_{k=1}^n\overline{m}_v(k)\, R(k,t)\\
R_S(t) &=& 1-\sum_{k=1}^n\overline{m}_{v^*}(k)\, F(k,t).
\end{eqnarray*}
\end{corollary}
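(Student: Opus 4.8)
The plan is to obtain both identities directly from Theorem~\ref{thm:RSmvs} by partitioning the sum over subsets according to cardinality. Starting from the first expression there, I would group the terms by the value $k=|A|$:
$$
R_S(t) = \sum_{A\subseteq[n]} m_v(A)\, R(\mathbf{e}^{0,t}_A) = \sum_{k=0}^n\ \sum_{A\subseteq[n]\, :\, |A|=k} m_v(A)\, R(\mathbf{e}^{0,t}_A).
$$
By the standing hypothesis, $R(\mathbf{e}^{0,t}_A)$ depends only on $|A|$, so for every $A$ with $|A|=k$ it equals the common value $R(k,t)$. That factor can therefore be pulled outside the inner sum, and what remains is precisely $\sum_{A\, :\, |A|=k} m_v(A)=\overline{m}_v(k)$ by definition. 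This yields $R_S(t)=\sum_{k=0}^n\overline{m}_v(k)\,R(k,t)$.

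The only point that needs a word of justification is why the summation may start at $k=1$ rather than $k=0$. The $k=0$ term corresponds to $A=\varnothing$, and $\overline{m}_v(0)=m_v(\varnothing)=v(\varnothing)=\phi_v(\mathbf{0})=0$, since $\phi$ was assumed nonconstant. Hence the $k=0$ contribution vanishes and the sum reduces to $\sum_{k=1}^n\overline{m}_v(k)\,R(k,t)$, as claimed.

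The second identity follows in exactly the same way from the dual expression in Theorem~\ref{thm:RSmvs}. Grouping $\sum_{A\subseteq[n]}m_{v^*}(A)\,F(\mathbf{e}^{t,\infty}_{[n]\setminus A})$ by $k=|A|$ and using that $F(\mathbf{e}^{t,\infty}_{[n]\setminus A})=F(k,t)$ depends only on $|A|$, the inner coefficient collapses to $\overline{m}_{v^*}(k)=\sum_{A\, :\, |A|=k}m_{v^*}(A)$. The $k=0$ term again drops, now because $m_{v^*}(\varnothing)=v^*(\varnothing)=1-v([n])=1-\phi_v(\mathbf{1})=0$. I do not expect any genuine obstacle here: the argument is a routine regrouping of a finite sum, and the only subtlety is the bookkeeping that makes the empty-set terms vanish so that both sums may legitimately begin at $k=1$.
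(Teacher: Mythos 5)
Your proof is correct and is essentially the paper's own argument: the corollary is stated there as an immediate consequence of Theorem~\ref{thm:RSmvs}, obtained exactly by grouping the sums by cardinality $k=|A|$ and using the definitions of $R(k,t)$, $F(k,t)$, and $\overline{m}_v(k)$. Your additional check that the $k=0$ terms vanish (via $v(\varnothing)=\phi_v(\mathbf{0})=0$ and $v^*(\varnothing)=1-\phi_v(\mathbf{1})=0$) is a welcome piece of bookkeeping that the paper leaves implicit.
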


\section{Systems with ``Bayes-dependent'' component lifetimes}

Consider a system where components' functioning is influenced by certain (perhaps, random) factors that may be both internal and external to the
system. Say, failure rates of individual components may be influenced by the system's current environment conditions, such as temperature,
pressure, precipitation, etc. Or, the whole assembly is made of subsystems that have ``central'' units and their status directly affects the
other units of the subsystem.

One way to model that situation is to introduce a set of (random) factors $U_1,\ldots,U_m$ whose joint probability density function
$g(u_1,\ldots,u_m)$ defined on a domain $D_g$ is known. Given a fixed set of the factor's values, the system's components' lifetimes are assumed
independent with individual conditional cumulative distribution functions $F_i(t,u_1,\ldots,u_m)$ depending on the factors' values. In that
case, the joint probability distribution function of the components' lifetimes acquires an ``integrated product'' form:
\begin{equation}\label{eq:CdfQuasiProd}
F(\mathbf{t}) = \int_{D_g} g(\mathbf{u})\,\prod_{i\in [n]} F_i(t_i,\mathbf{u})\, d\mathbf{u}.
\end{equation}
We will refer to such interdependence pattern as ``Bayes-dependence''.

Clearly, when values of factors $U_1,\ldots, U_m$ are fixed, since $T_1,\ldots, T_m$ are (conditionally) independent, the indicator variables $X_1(t),\ldots, X_m(t)$ are too, so using (\ref{eq:CdfQuasiProd}) in (\ref{eq:RSv1}) with fixed $\mathbf{u}$ the same way as in the derivation of (\ref{eq:RsRiInd}) we then integrate over all possible values of $\mathbf{u}$ and obtain a generalization of (\ref{eq:RsRiInd}):
\begin{eqnarray*}
R_S(t) &=& \int_{D_g} g(\mathbf{u})\,\Pr(T_S > t \mid \mathbf{u})\, d\mathbf{u}\\
&=& \int_{D_g} g(\mathbf{u})\,\sum_{A\subseteq [n]} v(A) \prod_{i\in A}R_i(t, \mathbf{u})\prod_{i\in [n]\setminus A}\big(1-R_i(t,
\mathbf{u})\big)\, d\mathbf{u},
\end{eqnarray*}
where $R_i(t,\mathbf{u}) = 1- F_i(t,\mathbf{u})$.

In turn, it leads to a generalization of the classical reliability formula
$$
R_S(t)=\int_{D_g} g(\mathbf{u})\, \overline{\phi}_v(R_1(t,\mathbf{u}),\ldots,R_n(t,\mathbf{u}))\, d\mathbf{u}.
$$

Once again, both $R_S(t)$ and $\mathrm{MTTF}_S$ can be expressed in different forms, according to the expressions of $\overline{\phi}_v$
corresponding to Table~\ref{tab:StrFun}. In particular, it is very convenient for calculations to use the primal M\"obius form of
$\overline{\phi}_v$:
\begin{eqnarray}
R_S(t) &=& \sum_{A\subseteq [n]} m_v(A) \, \int_{D_g} g(\mathbf{u})\,\prod_{i\in A} R_i(t,\mathbf{u})\, d\mathbf{u},\label{eq:RsMobFCInd}\\
\mathrm{MTTF}_S &=& \sum_{A\subseteq [n]} m_v(A) \, \int_{D_g} g(\mathbf{u})\,\int_0^{\infty} \prod_{i\in A} R_i(t,\mathbf{u})\, dt\,
d\mathbf{u}.\label{eq:MTTFsMobFCInd}
\end{eqnarray}

In some cases it is natural to think of the ``intrinsic randomness'' of a unit's lifetime (expressed in the shape of its distribution) as
specific to the unit itself, while the overall parameters of the lifetime (mean, variance, etc.) can be influenced by the external factors. One
way to model that is to consider only the distribution parameters as functions of those factors (without changing the form of the distribution).

For example, the assumption of constant unit failure rate is widespread in the literature and often justified by data. At the same time,
external factors, physical or societal, often influence values of the rates. In the framework of Bayes-dependence, one can model that situation
by setting $R_i(t,\mathbf{u}) = e^{-\lambda_i(\mathbf{u})t}$.

Computing the integrals in the formulas above, one obtains:
\begin{eqnarray*}
R_S(t) &=& \sum_{A\subseteq [n]} m_v(A) \, \int_{D_g} e^{-\lambda_A(\mathbf{u})t}\, g(\mathbf{u})\, d\mathbf{u},\\
\mathrm{MTTF}_S &=& \sum_{A\subseteq [n]} m_v(A) \, \int_{D_g}\frac{g(\mathbf{u})}{\lambda_A(\mathbf{u})}\,
d\mathbf{u},
\end{eqnarray*}
where $\lambda_A(\mathbf{u}) = \sum_{i\in A}\lambda_i(\mathbf{u})$.

\begin{remark}
The above results actually require merely that the joint distribution function of the components' lifetimes should have the integrated product
form of (\ref{eq:CdfQuasiProd}) only when the arguments of the distribution function are either equal to $t$ or $\infty$.
\end{remark}

\section{Systems with pre-phase}

In practice and literature many cases emerge where the units' failure rates should be modeled as nonconstant, say, increasing with elapsed time.
A popular way to address that issue is to present the rates as functions of the elapsed time using, for example, Weibull distribution. However,
so constructed models are, for obvious reasons, much harder to analyze mathematically and provide reliable estimates of parameters
statistically. Moreover, in the context of reliability systems it is impossible to ignore the influence of joint functioning and interaction on
individual units' failure rates.

The framework of Bayes-dependence yields a natural way to make a model where both the joint functioning and elapsed time are accounted for and
the analytical convenience of exponential distribution can be retained.

Let each component's lifetime $T_i$ consist of a random variable (``pre-phase'') $U$ common for all components followed by, once the pre-phase
is over at some time $u$, the subsequent individual ``decay'' phase $Y_i$. Thus, both the variability of the system's failure rate and the
in-system interaction effect can be modeled through the pre-phase distribution, leaving the analyst enough room to model the residual lifetime
of the system separately.

We will denote by $G(u)$, $g(u)$, and $E[U]$ the distribution and density functions and expectation of $U$. The conditional distribution
function of each decay phase given that $U = u$ will be denoted by $F_i (y,u)$.

This case, too, belongs in the class of Bayes-dependence, where the pre-phase duration $U$ plays the role of an external factor and
\begin{equation}\label{eq:s7f6sdfdf}
R_i(t,u) = 1-F_i(|t-u|_+,u),
\end{equation}
where we denote $|x|_+ = \max(x,0)$.

%
%

Substituting (\ref{eq:s7f6sdfdf}) in (\ref{eq:RsMobFCInd}) and using the identity $\sum_{A\subseteq [n]}m_v(A)=1$, we obtain
\begin{eqnarray*}
R_S(t) &=& \sum_{A\subseteq [n]}m_v(A)\int_0^{\infty}g(u)\,\prod_{i\in A}R_i(t,u)\, du\\
&=& \sum_{A\subseteq [n]}m_v(A)\int_t^{\infty}g(u)\, du + \sum_{A\subseteq [n]}m_v(A)\int_0^t g(u)\,\prod_{i\in A}\big(1-F_i(t-u,u)\big)\, du\\
&=& 1-G(t) + \int_0^t\sum_{A\subseteq [n]}m_v(A)\,\prod_{i\in A}\big(1-F_i(t-u,u)\big)g(u)\, du.
\end{eqnarray*}
Thus, we have
$$
R_S(t)=1-G(t)+\int_0^t R^*_S(t-u,u)\, g(u)\, du,
$$
where
$$
R^*_S(y,u)=\sum_{A\subseteq [n]}m_v(A)\,\prod_{i\in A}\big(1-F_i(y,u)\big).
$$

From this result we obtain
\begin{eqnarray*}
\mathrm{MTTF}_S &=& \int_0^{\infty}R_S(t)\, dt\\
&=& \int_0^{\infty}\big(1-G(t)\big)\, dt + \int_0^{\infty}\int_0^t R^*_S(t-u,u)\, g(u)\, du\, dt\\
&=& E[U] + \int_0^{\infty}\int_u^{\infty}R^*_S(t-u,u)\, dt\, g(u)\, du\\
&=& E[U] + \int_0^{\infty}\int_0^{\infty}R^*_S(t,u)\, dt\, g(u)\, du
\end{eqnarray*}
and hence
$$
\mathrm{MTTF}_S = E[U] + \int_0^{\infty}\mathrm{MTTF}^*_S(u)\, g(u)\, du,
$$
where
$$
\mathrm{MTTF}^*_S(u) = \int_0^{\infty}R^*_S(t,u)\, dt = \sum_{A\subseteq [n]}m_v(A)\,\int_0^{\infty}\prod_{i\in A}\big(1-F_i(t,u)\big)\, dt.
$$

When all individual decay phases have exponential distribution with parameters $\lambda_i(u)$, that is $F_i(y,u)=1-e^{-\lambda_i(u)y}$, the
formulas above reduce to
\begin{eqnarray*}
R_S(t) &=& 1-G(t)+\sum_{A\subseteq [n]} m_v(A) \, \int_0 ^t e^{-\lambda_A (u)|t-u|_+}\, g(u)\, du,\\
\mathrm{MTTF}_S &=& E[U] + \sum_{A\subseteq [n]} m_v(A)\, \int_0 ^{\infty}\frac{g(u)}{\lambda_A(u)}\, du,
\end{eqnarray*}
where $\lambda_A (u) = \sum_{i\in A}\lambda_i(u)$.

The previous formulas provide analytically simple results when the pre-phase' distribution is modeled using a piecewise constant density
function. For example, in the simplest case where the pre-phase is uniformly distributed in an interval $[a,b]$ and decay failure rates are
constant we have
$$
R_S(t)=%
\begin{cases}
1, & \mbox{if $t<a$},\cr \displaystyle{\frac{b-t}{b-a}+\sum_{A\subseteq[n]} m_v(A)\,\frac{1 - e^{\lambda_A(a-t)}}{\lambda_A (b-a)}}, & \mbox{if
$a\leqslant t\leqslant b$},\cr \displaystyle{\sum_{A\subseteq[n]} m_v(A)\, e^{-\lambda_A t}\,\frac{e^{\lambda_A b} - e^{\lambda_A a}}{\lambda_A(b-a)}}, & \mbox{if
$b<t$}.\cr
\end{cases}
$$

\section{Systems with collective bounds}

The environment in which a system is installed typically imposes its constraints on the functioning of the system. A power source that feeds
several components imposes an upper bound on their service duration -- it becomes a maximum of the component's own lifetime and the source's
one.

There is extensive reliability literature on a model where upper bounds are imposed by fatally disabling external shocks to individual components and subsystems. The shock emergence processes are assumed Poisson, independent of the system and independent for different components and subsystems. It has been shown that in the reliability terms the action of shocks on the system is equivalent to assuming a Marshall-Olkin type distribution for the component lifetimes. (One can see, e.g., \cite{MulSca87}.)

In the context of insurance applications, a contract can be purchased with an on call emergency service that backs up services of some
components. It provides a lower bound on their service duration, that is, component's service duration becomes a minimum of its intrinsic
lifetime and the emergency service's one.

Today, in a complex system component services may have a complex combination of upper and lower bounds imposed on them by the nature of the
system's purpose and its environment. In general, the presence of collective bounds and interaction of components with them, all the maximums
and minimums that emerge that way, can be represented as follows.

Denote by $Q_1, Q_2,\ldots,Q_m$ bounding random variables and by $T^0_i$  -- the intrinsic lifetime of the $i$th component (if left alone), the
component's service duration emerging as a result of interaction of its intrinsic lifetime and relevant bounding factors can be expressed using
an l.p.\ function $q_i$
$$
T_i = q_i (T^0 _i, Q_1,\ldots,Q_m).
$$

Using that in Theorem~\ref{thm:SFvsLP} and $p$ as the l.p.\ function of the system, the overall system's time-to-failure can be written as
\begin{equation}\label{eq:TSBounds}
T_S = p\big(q_1(T^0_1, Q_1, \ldots,Q_m),\ldots, q_n (T^0 _n, Q_1,\ldots,Q_m)\big).
\end{equation}

\begin{remark}
When all bounding variables are constant equation (\ref{eq:TSBounds}) presents $T_S$ as a so-called ``weighted'' lattice polynomial function of
intrinsic lifetimes; see \cite{DukMar08}.
\end{remark}

A composition of l.p.\ functions, it follows immediately from the definition, is an l.p.\ function itself. We will denote the one in
(\ref{eq:TSBounds}) by $p^a$. It corresponds to what we will refer to as ``the augmented system'', whose units are of two kinds: the original
system's components forming a set $[n]$ and the set of binding factors denoted by $[m]$. Now equation (\ref{eq:TSBounds}) becomes
$$
T_S = p^a(T^0_1,\ldots,T^0_n,Q_1,\ldots,Q_m).
$$

Under a natural assumption that intrinsic components' lifetimes are independent of the bounding factors, the augmented system falls into the
class of Bayes-dependent systems where the bounding variables play the role of dependence-inducing factors. To apply the relevant formulas of
Sections 2 and 3 to this case the following notation will be needed.

\begin{itemize}
\item Joint reliability function of bounding variables: $$R_b(\mathbf{t}) = \Pr(Q_j>t_j\;\forall j\in [m]);$$

\item Component subsets of the augmented system: $A \oplus B, A\subseteq [n], B\subseteq [m]$;

\item $v^a$: the ``$v$''-function corresponding to $p^a$.
\end{itemize}

Now formulas (\ref{eq:RsMobFCInd}) and (\ref{eq:MTTFsMobFCInd}) yield that

\begin{eqnarray}
R_S(t) &=& \sum_{A\subseteq [n]} \sum_{B\subseteq [m]} m_{v^a}(A \oplus B)\, R_b(\mathbf{e}^{0,t}_B)\,\prod_{i\in A} R_i(t),\label{eq:RsMobBounds}\\
\mathrm{MTTF}_S &=& \sum_{A\subseteq [n]}\,\sum_{B\subseteq [m]} m_{v^a}(A\oplus B)\,\int_0^{\infty} R_b(\mathbf{e}^{0,t}_B) \,\prod_{i\in A}
R_i(t)\, dt,\label{eq:MTTFsMobBounds}
\end{eqnarray}

A practically important special case arises under additional assumptions that the original components' intrinsic lifetimes have exponential
distributions and that bounding factors are independent of each other. Then
$$
R_b(\mathbf{e}^{0,t}_B) = \prod_{j\in B} R_j(t)
$$
and formulas (\ref{eq:RsMobBounds}) and (\ref{eq:MTTFsMobBounds}) yield the following:
\begin{eqnarray*}
R_S(t) &=& \sum_{A\subseteq [n]}\,\sum_{B\subseteq [m]} m_v(A\oplus B)\, e^{-\lambda_A t}\prod_{j\in B} R_j(t),\\
\mathrm{MTTF}_S &=& \sum_{A\subseteq [n]}\,\sum_{B\subseteq [m]} m_v(A\oplus B)\,\int_0^{\infty} e^{-\lambda_A t}\, \prod_{j\in B} R_j(t)\, dt,
\end{eqnarray*}
where, as before, $\lambda_A = \sum_{i\in A}\lambda_i$.

\section*{Acknowledgments}

The authors thank the referees for constructive suggestions. Jean-Luc Marichal acknowledges support by the internal research project
F1R-MTH-PUL-09MRDO of the University of Luxembourg.


\end{document}